\documentclass[11pt]{amsart}

\usepackage{amsmath}
\usepackage{amssymb}
\usepackage{mathrsfs}
\usepackage{comment}
\usepackage{color}
\usepackage[colorlinks,citecolor=blue,urlcolor=black,linkcolor=black]{hyperref}

\newtheorem{theorem}{Theorem}[section]

\newtheorem{proposition}[theorem]{Proposition}
\newtheorem{corollary}[theorem]{Corollary}
\newtheorem{obs}[theorem]{Observation}

\theoremstyle{definition}
\newtheorem{definition}[theorem]{Definition}

\newtheorem{question}[theorem]{Question}

\theoremstyle{remark}

\newcount\skewfactor
\def\mathunderaccent#1#2 {\let\theaccent#1\skewfactor#2
\mathpalette\putaccentunder}
\def\putaccentunder#1#2{\oalign{$#1#2$\crcr\hidewidth
\vbox to.2ex{\hbox{$#1\skew\skewfactor\theaccent{}$}\vss}\hidewidth}}

\def\smallbox#1{\leavevmode\thinspace\hbox{\vrule\vtop{\vbox
   {\hrule\kern1pt\hbox{\vphantom{\tt/}\thinspace{\tt#1}\thinspace}}
   \kern1pt\hrule}\vrule}\thinspace}

\newcommand{\cf}{{\rm cf}}

\def\qedref#1{$\qed_{\reforiginal{#1}}$}

\title{Splendid extensions}
\author{Shimon Garti}
\address{Einstein Institute of Mathematics,
 The Hebrew University of Jerusalem,
 Jerusalem 9190401, Israel}
\email{shimon.garty@mail.huji.ac.il}

\subjclass[2010]{03H15}
\keywords{Models of Peano, Aronszajn trees, Kaufmann models, recursive saturation, splendid models, large cardinals}
\thanks{}

\begin{document}
\let\labeloriginal\label
\let\reforiginal\ref

\begin{abstract}
We show that one can force the non-existence of $\kappa^+$-splendid models, where $\kappa$ is an infinite cardinal.
We prove a similar result in \textsf{ZFC} under the assumption of an ineffable cardinal.
\end{abstract}

\maketitle

\newpage

\section{Introduction}

Let $N$ be a model, and let $I$ be a cut of $N$.
The collection of $X\subseteq{I}$ for which there is $Y\in{\rm Def}(N)$ so that $X=Y\cap{I}$ is denoted by ${\rm Cod}(N/I)$.
Usually, we deal with ${\rm Cod}(N/M)$ where both $M$ and $N$ are models and $N$ is an extension of $M$.\footnote{Models in this paper are models of \textsf{PA}, and extensions are proper extensions, unless otherwise stated.}
The following comes from \cite[Corollary 8.2.5]{MR2250469}, and it is fundamental for the results in the current paper.

\begin{proposition}
  \label{propks} Every countable recursively saturated model $M$ has two countable recursively saturated elementary end-extensions $M_0$ and $M_1$ such that ${\rm Cod}(M_0/M)\cap{\rm Cod}(M_1/M)={\rm Def}(M)$.
\end{proposition}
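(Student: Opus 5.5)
We build $M_0$ and $M_1$ simultaneously by a back-and-forth construction of length $\omega$. The main tools are recursive saturation and the standard fact that a countable recursively saturated model is resplendent. In particular, for any countable recursively saturated $M$ and any type $p(x)$ over $M$ that is consistent with the theory of $(M,a)_{a\in M}$, there is a recursively saturated elementary end-extension realizing $p$. We use the version of this in which the type may contain an axiom saying that a new element is above all of $M$.

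The basic engine is a \emph{minimal-type} or \emph{definable-type} construction. Recall that if $N=M(c)$ is generated by an element $c$ whose type is an end-extensional type given by a definable type over $M$, then ${\rm Cod}(N/M)={\rm Def}(M)$. Here we want more flexibility. Each of $M_0$ and $M_1$ separately may code non-definable sets, but we must ensure that no non-definable $X\subseteq M$ is coded in both. So we list all potential codes: formulas $\varphi(x,b_0)$ with $b_0\in M_0$ and $\psi(x,b_1)$ with $b_1\in M_1$.

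Concretely, we fix enumerations and build increasing sequences of finite \emph{partial types} $p_n(\bar u)$ and $q_n(\bar v)$ over $M$, with parameters from $M$. These describe the eventual generators of $M_0$ and $M_1$ over $M$, including the requirement that they lie above $M$. Each $p_n,q_n$ is chosen so that it is consistent, is recursive in a parameter available in $M$ (using recursive saturation to find codes in $M$), and is realizable in a recursively saturated end-extension.

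At stage $n$ we handle one pair $(\varphi(x,\bar u),\psi(x,\bar v))$. We want to extend $p_n,q_n$ so that either the set coded by $\varphi$ differs from the set coded by $\psi$, or the coded set is definable in $M$. The key step is a \emph{splitting argument} in $M$, reasoning inside \textsf{PA}. Suppose that for every extension we could not force disagreement. Then the set $\{x : \varphi(x,\bar u)\}$ restricted to $M$ would be determined by $p_n$ alone, via the condition ``$x$ belongs iff $p_n\cup\{\varphi(x,\bar u)\}$ is consistent''. Since $p_n$ is coded in $M$ and $M$ satisfies enough of a truth predicate for the relevant formulas (recursive saturation gives satisfaction classes), that set is definable in $M$. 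Otherwise there is some $a\in M$ and extensions $p_{n+1}\ni\varphi(a,\bar u)$ and $q_{n+1}\ni\neg\psi(a,\bar v)$, or vice versa, which kills the pair.

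After $\omega$ stages, $p=\bigcup p_n$ and $q=\bigcup q_n$ are complete types over $M$. We realize each one by resplendence to get countable recursively saturated elementary end-extensions $M_0,M_1$ generated by $M$ together with realizations of $p,q$. Because every element of $M_i$ is definable from the generators and parameters, the enumeration of pairs covers all codes. We also interleave completeness steps: at each stage we decide one further formula and add one ``above $M$'' requirement. We must take care that the limit types remain recursive in a parameter, so that recursively saturated realizations exist. This is handled by keeping the construction arithmetic in a satisfaction class of $M$, and then invoking the theorem that $M$ together with a full satisfaction class has recursively saturated end-extensions realizing such types (Kaufmann/Schmerl).

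The main obstacle is the splitting step. We must show that the failure to split yields a set definable in $M$ itself, not merely in $(M,S)$ with a satisfaction class. If the naive argument only gives $(M,S)$-definability, my first fallback is to make the generators realize \emph{end-extensional, unbounded} types, in the style of Gaifman's minimal types. These make consistency questions reduce to largeness of definable sets in $M$, with a single formula evaluated by $M$'s own definitions, and this restores genuine definability.
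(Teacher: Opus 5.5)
There is a genuine gap. It sits in the last step, not in the splitting step you flag.

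\textbf{The realization step.} Your control of ${\rm Cod}(M_i/M)$ rests on $M_i$ being the Skolem hull of $M$ together with the realizations of $p$ (resp.\ $q$). That hull is determined up to isomorphism over $M$ by $p$ alone, so resplendence cannot make it recursively saturated after the fact. Recursive saturation would have to be built into $p$ by further dense sets, and none of your interleaved tasks does this. Nor is it a routine addition: realizing a recursive type $r(x,t(\bar u))$ in the hull means committing to infinitely many formulas of unbounded complexity at once, which finite conditions do not in general allow.

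If instead you take $M_i$ to be some larger recursively saturated end extension containing the realizations, its other elements code subsets of $M$ that your enumeration of pairs never handled. The intersection property is then lost.

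The appeal to satisfaction classes does not bridge this; the cited result is not one you can invoke in that form. It can also backfire. Suppose one satisfaction class $S$ of $M$ is carried into both extensions, $(M,S)\prec_e(M_i,S_i)$, with induction for the expanded language. Then for $c\in M_i\setminus M$, the code of $S_i\cap[0,c)$ puts $S$ into ${\rm Cod}(M_0/M)\cap{\rm Cod}(M_1/M)$. But $S\notin{\rm Def}(M)$ by Tarski's theorem.

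\textbf{The splitting step is harmless.} Read ``consistent'' as ``large'' (unbounded, iterated for several generators), which is what realizability above $M$ requires. Suppose the conditions $(\theta,\chi)$ admit no splitting extension for the pair $(\varphi,\psi)$. Then for each $a\in M$ exactly one of $\theta\wedge\varphi(a,\bar u)$ and $\theta\wedge\neg\varphi(a,\bar u)$ is large, likewise for $\chi$ and $\psi$, and the two choices agree. So both generics code $\{a:\theta\wedge\varphi(a,\bar u)\text{ is large}\}$, which is first-order definable in $M$ because $\theta$ is a single formula. No satisfaction class is needed.

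Two smaller points. End-extensionality needs its own dense sets; ``above $M$'' does not suffice. Also, your version of resplendence is false as stated. For nonstandard $a\in M$, the type $\{x>b: b\in M\}\cup\{x\bmod a\neq b: b<a\}$ is consistent with the diagram of $M$ but is realized in no end extension.

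\textbf{What is missing} is a source of recursive saturation that does not disturb the coding control. One way, in the spirit of the Baire category argument the paper mentions, runs as follows.
\begin{enumerate}
\item Fix any countable recursively saturated $M_0\succ_e M$.
\item For $h\in{\rm Aut}(M)$, transport $M_0$ along $h$. This gives $M_1\cong M_0$ with $M\prec_e M_1$ and ${\rm Cod}(M_1/M)=\{Z: h[Z]\in{\rm Cod}(M_0/M)\}$. Recursive saturation of $M_1$ is then automatic.
\item Members of ${\rm Cod}(M_0/M)$ are classes of $M$. By homogeneity and recursive saturation of $M$, a class invariant under ${\rm Aut}(M,\bar a)$ is $\bar a$-definable.
\item Hence for undefinable $Z,Y\in{\rm Cod}(M_0/M)$, the closed set $\{h: h[Z]=Y\}$ has empty interior in the Polish group ${\rm Aut}(M)$.
\item There are only countably many such pairs, so a generic $h$ avoids all of these sets, and then ${\rm Cod}(M_0/M)\cap{\rm Cod}(M_1/M)={\rm Def}(M)$.
\end{enumerate}
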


It should be noted that if $M\prec_{e}N$ and $N$ is recursively saturated then $N$ \emph{is not} conservative over $M$, see \cite[Proposition 10.1.2]{MR2250469}.
Thus, recursive saturation contradicts conservativity if the extension is elementary.
The above statement shows that one can get something that resembles conservativity, by forking over $M$ with two recursively saturated models $M_0,M_1$, each of which may add some new definable sets but together they form a parallel to a conservative extension.

An essential component of the proof is the Baire category theorem, in which one intersects countably many dense sets in some relevant topology.
In particular, it is not clear whether the countability of the pertinent models is indispensable in the above proposition.
The main result of the paper reads as follows.

\begin{theorem}
  \label{thmmt} Let $\kappa$ be a successor cardinal.
  It is consistent that every recursively saturated model of cardinality $\kappa$ extends to a model $M$, where $|M|=\kappa$, and if $M_0,M_1$ are recursively saturated elementary end-extensions of $M$ of size $\kappa$ then ${\rm Cod}(M_0/M)\cap{\rm Cod}(M_1/M)\neq{\rm Def}(M)$.
\end{theorem}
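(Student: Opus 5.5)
Write $\kappa=\lambda^+$. The plan is to force a suitable combinatorial principle at $\kappa$ and then build $M$ by a Kaufmann-style construction, in which the failure of conservativity is witnessed by an Aronszajn-like tree. The keywords (Aronszajn trees, Kaufmann models) suggest the following.

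\textbf{Step 1: the model of set theory.} Force $\diamondsuit_\kappa$. For $\kappa=\lambda^+$ this is also compatible with the existence of a $\kappa$-Suslin tree, and any additional guessing needed should be available via a $\lambda^+$-closed forcing that adds such a tree. In this extension I use $\diamondsuit_\kappa$ to construct, from any recursively saturated $M^*$ with $|M^*|=\kappa$, a $\kappa$-like elementary end-extension $M=\bigcup_{\alpha<\kappa}M_\alpha$. The initial segments $M_\alpha$ are recursively saturated of size $\lambda$, and the construction is continuous at limits. This is the analogue of Kaufmann's $\diamondsuit_{\omega_1}$ construction of rather classless models, lifted from $\omega_1$ to $\kappa$. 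At stage $\alpha$ the diamond sequence guesses a subset $A_\alpha\subseteq M_\alpha$. If $A_\alpha$ is a class of $M_\alpha$ that is not definable, we choose $M_{\alpha+1}$ so that $A_\alpha$ is not coded there, using that $M_\alpha$ is recursively saturated of size $\lambda$. The resulting $M$ should carry a definable $M$-tree $T$ (for instance the tree of binary sequences coded in $M$ satisfying some $\Pi$-type condition) such that every undefinable subset of $M$ coded from any elementary end-extension yields a cofinal branch, and $T$ has no definable branch.

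\textbf{Step 2: extracting a common coded set.} Let $M_0,M_1$ be recursively saturated elementary end-extensions of $M$ of size $\kappa$. Recursive saturation of $M_i$ gives a nonstandard element $c_i\in M_i\setminus M$ above $M$. Restricting to $M$ the node of $T^{M_i}$ at level $c_i$ gives a branch $B_i\in{\rm Cod}(M_i/M)$ through $T$. The heart of the argument is then to show that $T$ is, in the sense of $M$, $\kappa$-Aronszajn-rigid. Concretely, $T$ should have only one (or boundedly many, arranged uniformly) undefinable cofinal branches coded from outside. Then $B_0=B_1$, which gives an element of ${\rm Cod}(M_0/M)\cap{\rm Cod}(M_1/M)\setminus{\rm Def}(M)$. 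The natural route is to construct $T$ so that, at club many $\alpha$, every branch through $T\restriction M_\alpha$ that the diamond predicts is sealed except one designated branch $b_\alpha$, and the $b_\alpha$ cohere. The limiting cofinal branch $b=\bigcup b_\alpha$ is then the only candidate coded branch. By a reflection argument, any coded branch $B_i$ agrees with $b$ on a stationary set of $\alpha$, hence everywhere.

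\textbf{Main obstacle.} The hard step is the sealing: we must ensure that no elementary end-extension of size $\kappa$ codes a branch different from $b$, even though such extensions are not controlled by the construction. I would handle this by a Fodor/pressing-down argument. Given $M_i$ and the coded $B_i\neq b$, consider the club of $\alpha$ such that $M_\alpha$ is closed under the relevant Skolem functions computed in $M_i$. At such $\alpha$, $B_i\cap M_\alpha$ is a class of $M_\alpha$, and it was guessed by $\diamondsuit_\kappa$ at stationarily many $\alpha$. The construction at those stages made every branch other than $b_\alpha$ bounded in $T$, which contradicts the cofinality of $B_i$. Making ``class of $M_\alpha$'' absolute enough for this to work is where the successor assumption $\kappa=\lambda^+$ and the forced guessing principle are used; this step is the delicate one.
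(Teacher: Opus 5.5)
\textbf{There is a genuine gap: neither step of your plan can be carried out, and the idea that makes the theorem work is missing.}

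\textbf{Step 1.} It cannot produce the model the statement asks for. A $\kappa$-like model cannot extend an arbitrary recursively saturated $M^*$ of size $\kappa$. If $M^*\subsetneq_e M$, then any $a\in M\setminus M^*$ has $a_M\supseteq M^*$, so $|a_M|=\kappa$. More generally, no model with an element having $\kappa$ predecessors is contained in a $\kappa$-like model; any proper elementary end-extension of a model of size $\kappa$ is such a model. So the clause ``every recursively saturated model of cardinality $\kappa$ extends to $M$'' is lost from the start.

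The successor step is also unsupported. For $|M_\alpha|=\lambda>\aleph_0$ you need a recursively saturated $M_{\alpha+1}\succ_e M_\alpha$ that does not code a given undefinable class. That is the uncountable analogue of Proposition \ref{propks}, which is exactly what a splendid pair would supply and exactly what is in question here.

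Finally, suppose Step 1 did succeed. Then $M$ would be $\kappa$-like, recursively saturated and rather classless, i.e.\ $\kappa$-Kaufmann. For $M\prec_e N$, every member of ${\rm Cod}(N/M)$ is a class of $M$, so every elementary end-extension of $M$ would be conservative. Hence $M$ would have no recursively saturated elementary end-extension at all (the remark after Corollary \ref{cornegsplendid}, via \cite[Corollary 10.1.6]{MR2250469}). There would be no $B_0,B_1$ for Step 2 to compare. The two steps work against each other, and the conclusion would hold only vacuously, and only for these special $\kappa$-like models.

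\textbf{Step 2.} It does not work on its own terms either.
\begin{enumerate}
\item A definable tree in $M$ with $M$-finite levels and nodes at every level has a definable cofinal branch: the leftmost extendible one, since PA proves this form of K\"onig's lemma.
\item If the levels are $M$-infinite, the predecessors in $T^{M_i}$ of a node at level $c_i$ need not lie in $M$, so $B_i$ need not be a branch of $T$.
\item A type-defined tree (e.g.\ of partial satisfaction classes correct on standard formulas) can avoid both problems. But then nothing in your construction controls which undefinable class an arbitrary, uncontrolled extension $M_i$ codes. The diamond only governs what $M_{\alpha+1}$ codes over $M_\alpha$.
\item Even if you spare one branch $b$, nothing prevents $M_0$ and $M_1$ from coding different undefinable sets, say different sets arithmetical in $b$ that do not define $b$. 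The uniqueness of $b$ is asserted, not proved.
\end{enumerate}

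\textbf{The missing idea.} The paper never constructs $M$ directly.
\begin{enumerate}
\item It passes to Mitchell's model (from a weakly compact). There $\kappa^+$ has the tree property, so by Schmerl there is no $\kappa^+$-Kaufmann model, while $2^\kappa<2^{\kappa^+}$ gives $\Phi_{\kappa^+}$.
\item Suppose every model reached from the given one had a splendid pair. Then one builds a binary tree $(M_\eta:\eta\in{}^{<\kappa^+}2)$ of splendid pairs.
\item An undefinable $X\cap M_\eta$ is coded on at most one side. But a class of the final union must be coded on the side the branch actually takes.
\item So a weak diamond function $g$ for the colouring that points to the non-coding side makes $\bigcup_\alpha M_{g\upharpoonright\alpha}$ rather classless. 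Hence it is $\kappa^+$-Kaufmann, a contradiction.
\item Therefore some node, an elementary end-extension of the given model of size $\kappa$, has no splendid pair.
\end{enumerate}

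Forcing $\Diamond_\kappa$ and a $\kappa$-Suslin tree acts at the wrong cardinal and in the wrong direction. Such principles help build Kaufmann models and Aronszajn trees, whereas the obstruction comes from forbidding them at $\kappa^+$.
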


We assume that there are large cardinals in the ground model, in order to force this result, though we do not know whether it has consistency strength.
Moreover, under the assumption of an ineffable cardinal $\kappa$ we prove, without forcing, that there are such models of cardinality less than $\kappa$.

The paper contains three additional sections.
The first section collects basic facts and definitions.
In the second section we prove the main theorem with respect to $\aleph_1$ and larger successor cardinals.
In the third section we obtain a similar result below large cardinals, and pose some natural open problems.
Our notation is (hopefully) standard, and we refer to \cite{MR1098499} and \cite{MR2250469} for background in models of Peano arithmetic.

\newpage

\section{Preliminaries}

Let us commence with the central definition of the paper.

\begin{definition}
  \label{defsplendid} Let $\kappa$ be an infinite cardinal and let $M$ be a model of cardinality $\kappa$.
  A pair of models $(M_0,M_1)$ is a splendid pair over $M$ iff $|M_0|=|M_1|=\kappa$, both are recursively saturated elementary end extensions of $M$, and ${\rm Cod}(M_0/M)\cap{\rm Cod}(M_1/M)={\rm Def}(M)$.
\end{definition}

As mentioned in the introduction, if $\kappa=\aleph_0$ then every model $M$ of size $\kappa$ admits a splendid pair.
The situation might be different for $\kappa=\aleph_1$, or for higher cardinalities.
In order to prove this, we need the concept of Kaufmann models.

Let $M$ be a model of size $\kappa$.
One says that $M$ is $\kappa$-like if $|M|=\kappa$ but $|a_M|<\kappa$ for every $a\in{M}$, where $a_M=\{b\in{M}\mid M\models b\leq{a}\}$.
If $\kappa=\aleph_0$ then the only $\kappa$-like model of \textsf{PA} is the standard model, but if $\kappa>\aleph_0$ then one can construct many $\kappa$-like models of \textsf{PA}.
The common way to build such models is by constructing an increasing continuous chain of elementary end-extensions of a given model $M$, using the MacDowell-Specker theorem from \cite{MR152447}.

A model $M$ is \emph{recursively saturated} if every recursive type over $M$ is realized in $M$.
We assume that there is a fixed G\"odel numbering of first-order formulae in our language, and $p$ is a recursive type if the set of natural numbers corresponding to the formulas in $p$ is a recursive set.
Note that if $M$ is non-standard then every $\Sigma_n$-recursive type is realized in $M$.
Being recursively saturated is slightly stronger, as a recursive type $p$ might contain formulas with unbounded degree in the arithmetic hierarchy.
By and large, recursive saturation indicates that the model is rich in some sense.

A subset $X$ of $M$ is called \emph{a class} if $X\cap{a}$ is definable in $M$ for every $a\in{M}$.
It is fairly possible that $X$ itself is not definable in $M$.
In fact, the following statement holds.

\begin{obs}
  \label{obsmanyclasses} Let $M$ be a model of \textsf{PA} for the language $\mathcal{L}\supseteq\mathcal{L}_{\textsf{PA}}$, where $|M|=\lambda, |\mathcal{L}|\leq\lambda$ and $\cf(\lambda)=\omega$.
  Then there is a class $X\subseteq{M}$ so that $X\notin{\rm Def}(M)$.
\end{obs}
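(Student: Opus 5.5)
The plan is a cardinality-plus-diagonalization argument. Since $|M|=\lambda$ and $|\mathcal{L}|\leq\lambda$, there are at most $\lambda$ formulas with parameters, so $|{\rm Def}(M)|\leq\lambda$. I will fix an enumeration ${\rm Def}(M)=\{D_i\mid i<\lambda\}$. I will also fix an increasing sequence of cardinals $\langle\lambda_n\mid n<\omega\rangle$ with $\sup_n\lambda_n=\lambda$, which is possible because $\cf(\lambda)=\omega$. The goal is to build $X$ in $\omega$ blocks so that the $n$-th block defeats every $D_i$ with $i<\lambda_n$. Then $X\neq D_i$ for all $i<\lambda$, while $X$ is still a class.

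Step one is to choose the blocks. I will pick elements $a_0<a_1<\cdots$ of $M$ such that two conditions hold.
\begin{itemize}
\item The sequence $\langle a_n\rangle$ is cofinal in $M$.
\item Each interval $[a_n,a_{n+1})$ carries more than $\lambda_n$ definable subsets.
\end{itemize}
For the second condition I use coding in \textsf{PA}: the definable subsets of $[a_n,a_{n+1})$ correspond to the elements below $2^{a_{n+1}-a_n}$. So it is enough that $|(2^{a_{n+1}-a_n})_M|>\lambda_n$, which holds once the interval is long enough. In the degenerate case where $M$ is standard we have $\lambda=\aleph_0$. Then every subset of $\omega$ is a class, and any non-arithmetical set (with respect to $\mathcal{L}$) works, since there are only countably many definable sets.

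Step two is the recursion. At stage $n$, each $D_i$ with $i<\lambda_n$ restricts to one definable subset of $[a_n,a_{n+1})$, so at most $\lambda_n$ restrictions occur. Since there are more than $\lambda_n$ definable subsets of the interval, I can choose a definable $E_n\subseteq[a_n,a_{n+1})$ that differs from every $D_i\cap[a_n,a_{n+1})$ with $i<\lambda_n$. I then set $X=\bigcup_n E_n$.

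Step three checks that $X$ is a class. For any $a\in M$, cofinality gives some $n$ with $a<a_{n+1}$. Then $X\cap a=(E_0\cup\cdots\cup E_n)\cap a$, a finite union of definable sets, hence definable. Step four checks that $X$ is not definable: any $D_i$ has $i<\lambda_n$ for some $n$, and then $X$ and $D_i$ disagree on $[a_n,a_{n+1})$.

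The main obstacle is step one. It needs an $\omega$-sequence cofinal in the order of $M$ whose blocks are large enough. If $M$ is $\lambda$-like, or more generally if $M$ has countable cofinality with the initial segments $a_M$ growing toward $\lambda$ in size, this is immediate. But the hypothesis only controls the cardinal $\lambda$, not the order type of $M$, so $\cf(M,<)$ might be uncountable.

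In that case I would first try to relativize the whole construction to a cut $I$ of $M$ of countable cofinality whose size is still large. Concretely, I would choose $b_n$ with $|(b_n)_M|\geq\lambda_n$ and let $I$ be the cut they determine. I would then build $X\subseteq I$ as above, so that $X\cap a$ is definable whenever $a\in I$. The remaining problem is to arrange $X\cap a$ definable for $a$ above $I$. This seems to require that the sequence $\langle E_n\rangle$ be coded in $M$, which conflicts with diagonalization. If this fails, I would fall back on restricting the statement to the $\lambda$-like models actually used later in the paper, where $\cf(M,<)=\cf(\lambda)=\omega$ automatically.
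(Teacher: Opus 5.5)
\paragraph{The unresolved case is a defect of the statement, not of your proof.} Without $\cf(M,<)=\omega$ the Observation is false, so no relativization to a cut can succeed.

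Here is a counterexample. Take any $M_0\models\textsf{PA}$ with $|M_0|=\aleph_\omega$. Let each $M_{\alpha+1}$ be a conservative elementary end extension of $M_\alpha$ of the same size (Gaifman), take unions at limits, and set $M=\bigcup_{\alpha<\omega_1}M_\alpha$. Then $|M|=\aleph_\omega$ and $\cf(M,<)=\omega_1$.

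Now let $X$ be a class of $M$.
\begin{enumerate}
\item Fix $a\in M_{\alpha+1}-M_\alpha$. The code of $X\cap a$ lies below $2^a$, hence in the initial segment $M_{\alpha+1}$. So $X\cap M_\alpha\in{\rm Cod}(M_{\alpha+1}/M_\alpha)={\rm Def}(M_\alpha)$.
\item For limit $\alpha$ write $X\cap M_\alpha=\varphi_\alpha(M_\alpha,b_\alpha)$. Fodor's lemma gives a stationary $S$ on which $\varphi_\alpha=\varphi$ and $b_\alpha\in M_\beta$ for a fixed $\beta$.
\item For $\alpha<\alpha'$ in $S$, elementarity gives $\varphi(M_\alpha,b_\alpha)=\varphi(M_{\alpha'},b_{\alpha'})\cap M_\alpha=\varphi(M_\alpha,b_{\alpha'})$.
\item Hence all the $b_\alpha$ with $\alpha\in S$ define the same set $D$ in $M$, and $X=D$.
\end{enumerate}
So $M$ is rather classless.

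The paper's proof carries the same hidden hypothesis: it presupposes unbounded subsets of order type $\omega$. This is harmless there, since the Observation is only applied to $\kappa$-like models. Your fallback is therefore exactly the right restriction. Countable cofinality alone already yields your step one when $\lambda>\aleph_0$: for a cofinal sequence $(c_n)$ we have $M=\bigcup_n(c_n)_M$, which forces $\sup_n|(c_n)_M|=\lambda$.

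\paragraph{Comparison with the paper under that hypothesis.} Your argument is correct and takes a different route. The paper counts:
\begin{itemize}
\item every unbounded set of order type $\omega$ is a class, because its bounded pieces are finite;
\item there are $\lambda^\omega>\lambda$ such sets, by K\"onig;
\item there are only $\lambda$ definable sets.
\end{itemize}
Your block-by-block diagonalization against an enumeration of ${\rm Def}(M)$ is essentially K\"onig's inequality unwound, with definable subsets of intervals in place of single points. It is longer, but it is explicit and pinpoints exactly where the cofinality of $M$ is used.

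For nonstandard $M$, neither counting nor diagonalization is needed. Any cofinal $\omega$-sequence $A$ is already undefinable: a definable unbounded set has a $k$-th element $d$ for nonstandard $k$, yet $A\cap d$ is genuinely finite. The cardinal hypotheses on $\lambda$ and $\mathcal{L}$ therefore matter only for the standard model, which your separate remark handles.
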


\par\noindent\emph{Proof}. \newline
If $X\subseteq{M}, {\rm otp}(X)=\omega$ and $X$ is unbounded in $M$ then $X$ is a class of $M$ since every finite subset of $M$ is definable.
There are $\lambda^\omega\geq\lambda^+$ such classes, but only $\lambda$-many definable classes since $|M|,|\mathcal{L}|\leq\lambda$.

\hfill \qedref{obsmanyclasses}

A model $M$ is \emph{rather classless} if every class of $M$ is definable.\footnote{The term rather classless was coined by Kaufmann in \cite{MR476498}.}
Being rather classless means that $M$ is poor in some sense.
The seminal paper of Kaufmann, \cite{MR476498}, investigates the possibility that a model $M$ is both rich and poor.\footnote{See the story of the poor man's ewe lamb, \cite[Chapter 12]{samuel}.}

\begin{definition}
  \label{defkaufmannmodel} Let $M$ be a model of \textsf{PA} and let $\kappa$ be an infinite cardinal.
  One says that $M$ is $\kappa$-Kaufmann iff $M$ enjoys the following properties:
  \begin{enumerate}
    \item [$(\aleph)$] $M$ is $\kappa$-like.
    \item [$(\beth)$] $M$ is recursively saturated.
    \item [$(\gimel)$] $M$ is rather classless.
  \end{enumerate}
\end{definition}

By Observation \ref{obsmanyclasses}, if $\kappa=\aleph_0$ or $\kappa>\cf(\kappa)=\omega$ then there are no $\kappa$-Kaufmann models.
On the other hand, if $\kappa>\cf(\kappa)>\omega$ then there exist $\kappa$-Kaufmann models as proved by Schmerl in \cite{MR619874}.
We focus, therefore, on regular uncountable cardinals.

Kaufmann proved in \cite{MR476498} that if $\Diamond_{\aleph_1}$ holds then there exists an $\aleph_1$-Kaufmann model.
Recall that $(A_\alpha\mid\alpha\in\omega_1)$ is a diamond sequence if $\{\alpha\in\omega_1\mid A\cap\alpha=A_\alpha\}$ is a stationary subset of $\omega_1$ whenever $A\subseteq\omega_1$.
One says that $\Diamond_{\aleph_1}$ holds if there exists a diamond sequence on $\omega_1$.
Clearly, $\Diamond_{\aleph_1}$ implies \textsf{CH} and hence this principle is not provable in \textsf{ZFC}.
However, $\aleph_1$-Kaufmann models do exist in \textsf{ZFC}.
By an absoluteness result of Shelah from \cite{MR501098}, Kaufmann's theorem actually holds in \textsf{ZFC}.

The situation is different if $\kappa>\aleph_1$.
Schmerl showed in \cite[Theorem 3]{MR619874} that if $\kappa=\cf(\kappa)>\aleph_0$ and there exists a $\kappa$-Kaufmann model then there is a $\kappa$-Aronszajn tree.
The existence of $\aleph_1$-Aronszajn trees is a consequence of \textsf{ZFC}, see \cite{MR3533035}, but already at the second uncountable cardinal one can force the non-existence of $\aleph_2$-Aronszajn trees.\footnote{Occasionally, one says that $\aleph_2$ has the tree property; this means that every tree of height $\omega_2$ and levels of size at most $\aleph_1$ has a cofinal branch. This terminology is common at every uncountable cardinal.}
This result is due to Mitchell, \cite{MR313057}, and in this model there are no $\aleph_2$-Kaufmann models.

The main purpose of the current paper is to show that consistently there are models of cardinality $\kappa^+$ with no splendid extension.
This statement is forceable at every successor cardinal.
The argument proceeds as follows.
Let $M$ be a model of size $\kappa^+$.
Build a tree in which $M$ is the root, and at every successor stage there are two models of cardinality $\kappa^+$ that form a splendid extension over the previous model.
At limit stages take unions.
If this construction is possible for $\kappa^{++}$-many stages, and if the combinatorial principle $\Phi_{\kappa^{++}}$ holds,\footnote{This principle is called \emph{weak diamond}, to be defined below.} then there is an $\kappa^{++}$-Kaufmann model.
Therefore, if one forces the non-existence of $\kappa^{++}$-Kaufmann models and $\Phi_{\kappa^{++}}$ holds, then the above construction is impossible.
Namely, at some stage there will be a model $M$ of size $\kappa^+$ with no splendid extension.
Remark that in this universe there are many models with no splendid extension.
In fact, every model of size $\kappa^+$ extends to a model of the same size with no splendid extension.

To round out the picture, let us recall the definition of the Devlin-Shelah weak diamond.
This principle was defined in \cite{MR469756} with respect to $\aleph_1$.
We phrase here the straightforward generalization to $\kappa$.

\begin{definition}
  \label{defweakdiamond} Let $\kappa$ be regular and uncountable.
  The weak diamond at $\kappa$ is the following statement: for every coloring $c:{}^{<\kappa}2\rightarrow\{0,1\}$ there exists a weak diamond function $g\in{}^\kappa{2}$ such that for every $f\in{}^\kappa{2}$ the set $\{\alpha\in\kappa\mid c(f\upharpoonright\alpha)=g(\alpha)\}$ is a stationary subset of $\kappa$.
\end{definition}

The weak diamond on $\kappa$ is denoted by $\Phi_\kappa$.
It has been proved in \cite{MR469756} that if $2^{\aleph_0}<2^{\aleph_1}$ then $\Phi_{\aleph_1}$ holds.
The converse is true as well, as indicated by U. Abraham.
More generally, if $2^\kappa<2^{\kappa^+}$ then $\Phi_{\kappa^+}$ holds, where the proof is identical with the case of $\aleph_1$.
Likewise, $\Phi_{\kappa^+}$ implies $2^\kappa<2^{\kappa^+}$, see \cite[Proposition 1.2]{MR3604115}.

As mentioned above, Kaufmann used diamond at $\aleph_1$ in his construction.
An alternative proof with weak diamond at $\aleph_1$ is suggested in \cite{MR2250469}.
In the next section we show that this construction works equally well at every uncountable regular cardinal, provided there are appropriate splendid extensions.
Combining this fact with the ability to force models with the tree property at many regular uncountable cardinals, we derive our main theorem.

\newpage

\section{Weak diamond and splendid extensions}

Let $\kappa$ be an infinite cardinal and let $M$ be a model of size $\kappa$.
MacDowell and Specker proved in \cite{MR152447} that there exists a model $N$ of size $\kappa$ so that $M\prec_e{N}$.\footnote{By $M\subseteq_e{N}$ we mean that $N$ is an end-extension of $M$.}
A slightly stronger statement has been proved by Gaifman in \cite{MR406791}.
Call $N$ \emph{a conservative} extension of $M$ if ${\rm Cod}(N/M)={\rm Def}(M)$.
Gaifman showed that every model $M$ extends to an elementary conservative extension $N$ of the same cardinality.
It is easy to see that a conservative elementary extension is an end extension.

The model of Gaifman must be \emph{minimal} over $M$, in some sense.
In particular, it cannot be recursively saturated.
However, if $|M|=\aleph_0$ then one can obtain a splendid pair over $M$, which is quite close to being conservative, see Theorem \ref{propks}.
The situation is different at uncountable cardinals.
The following will be useful for the proof of the main theorem.

\begin{definition}
  \label{defamenable} Let $\kappa$ be a regular cardinal.
  We shall say that $\kappa$ is recursively amenable if every recursively saturated model $M$ of size $\kappa$ has a splendid extension of the same cardinality.
\end{definition}

We can state now the main result:

\begin{theorem}
  \label{thmtreepropertyandwd} Assume that:
  \begin{enumerate}
    \item [$(\aleph)$] $\kappa^+$ has the tree property.
    \item [$(\beth)$] $2^\kappa<2^{\kappa^+}$.
  \end{enumerate}
  Then $\kappa$ is not recursively amenable. In particular, there is a model $M$ of size $\kappa$ with no splendid extension.
\end{theorem}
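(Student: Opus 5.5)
We argue by contradiction: assume $\kappa$ is recursively amenable, and use $(\beth)$ together with the splendid extensions to build a $\kappa^+$-Kaufmann model. By Schmerl's theorem \cite[Theorem 3]{MR619874} such a model yields a $\kappa^+$-Aronszajn tree, contradicting $(\aleph)$. Hence some recursively saturated model of size $\kappa$ has no splendid pair over it. Starting from an arbitrary recursively saturated model of size $\kappa$, this also gives the ``in particular'' clause.

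The first step is to build a binary tree of models. Fix a countable recursively saturated model of \textsf{PA} and enlarge it to a recursively saturated model $M_{\langle\rangle}$ of size $\kappa$. For each $\eta\in{}^{<\kappa^+}2$ we define a recursively saturated model $M_\eta$ of size $\kappa$ with $M_\eta\prec_e M_\nu$ whenever $\eta\subseteq\nu$.
\begin{itemize}
\item At successor nodes, recursive amenability applied to $M_\eta$ gives a splendid pair, and we let $(M_{\eta^\frown 0},M_{\eta^\frown 1})$ be that pair.
\item At a limit $\delta$ we let $M_\eta=\bigcup_{\alpha<\delta}M_{\eta\upharpoonright\alpha}$. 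Since $\delta<\kappa^+$ this union has size $\kappa$.
\end{itemize}
The limit step is where recursive saturation needs care: a union of an elementary chain of recursively saturated models of cofinality $\omega$ is recursively saturated by compactness (a recursive type over finitely many parameters is realized at a stage containing them). For uncountable cofinality the same argument works, since the parameters lie in some earlier model. So recursive saturation survives limits.

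For each $f\in{}^{\kappa^+}2$ set $M_f=\bigcup_{\alpha<\kappa^+}M_{f\upharpoonright\alpha}$. Each such model is $\kappa^+$-like, because every element lies in some $M_{f\upharpoonright\alpha}$, whose initial segments have size $\kappa$ by end-extension. Each is also recursively saturated by the union argument. It remains to find one $f$ for which $M_f$ is rather classless, and here we invoke $\Phi_{\kappa^+}$, which follows from $(\beth)$ as noted above.

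The main obstacle is designing the coloring. We code each node so that $M_{\eta}$ has universe $\kappa$; for a limit $\alpha$, $M_{f\upharpoonright\alpha}$ then has universe $\alpha$ on a club of $\alpha$. At such $\alpha$, let $c(f\upharpoonright\alpha)$ guess a potential class $X\cap\alpha$, presented by $f$ on a coded interleaving. The value $c(f\upharpoonright\alpha)=i$ should mean that $X\cap\alpha\notin{\rm Cod}(M_{(f\upharpoonright\alpha)^\frown i}/M_{f\upharpoonright\alpha})$. Because $(M_{\eta^\frown0},M_{\eta^\frown1})$ is splendid, an undefinable $X\cap\alpha$ is not coded in at least one of the two extensions, so this is well defined; if $X\cap\alpha$ is definable, let $c=0$.

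Let $g$ be a weak diamond function for $c$, and take $f$ to follow $g$ at these stages, i.e.\ $f(\alpha)=g(\alpha)$ in the relevant coordinates. Now suppose $X$ is a class of $M_f$ that is not definable. The set of $\alpha$ where $X\cap\alpha$ is a class of $M_{f\upharpoonright\alpha}$ is club, and on a club $X\cap\alpha$ is undefinable there; the latter uses a L\"owenheim--Skolem closure argument. On the stationary set of $\alpha$ where $c$ guesses $g(\alpha)$ correctly, $X\cap\alpha$ is then not coded in $M_{f\upharpoonright(\alpha+1)}$. This contradicts $X$ being a class, since $X\cap a$ is definable for $a$ above $\alpha$ in $M_{f\upharpoonright(\alpha+1)}$.

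The delicate point is that $f$ must both encode the candidate $X$ (to feed the coloring) and follow $g$. I would handle this by quantifying over all $X$: apply weak diamond to pairs $(f,X)$ interleaved into a single sequence, with $c$ reading both. The required contradiction is then that $X\cap\alpha$ is not in ${\rm Cod}(M_{(f\upharpoonright\alpha)^\frown g(\alpha)}/M_{f\upharpoonright\alpha})$, which is exactly what the standard Kaufmann argument (as in \cite{MR2250469}) gives.
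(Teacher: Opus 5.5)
Your proposal is correct and is essentially the paper's proof. Both build a binary tree of splendid pairs of height $\kappa^+$, apply weak diamond to pairs (branch, candidate class) with the branch taken to be the weak diamond function $g$ itself, use a coloring that selects the side of the splendid pair not coding $X\cap\alpha$, and then invoke Schmerl's theorem against the tree property. Your closing step, using $X\cap a$ for some $a\in M_{f\upharpoonright(\alpha+1)}\setminus M_{f\upharpoonright\alpha}$, is a bit more explicit than the paper's, as is your check that recursive saturation survives limit stages. One slip to fix: the nodes should have universes that are ordinals below $\kappa^+$ (equal to $\alpha$ on a club), not universe $\kappa$.
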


\par\noindent\emph{Proof}. \newline
Using a pairing function from ${}^{<\kappa^+}2$ to ${}^{<\kappa^+}2\times{}^{<\kappa^+}2$ we may assume that for every $c:{}^{<\kappa^+}2\times{}^{<\kappa^+}2\rightarrow\{0,1\}$ there exists a weak diamond function $g\in{}^\kappa{2}$ such that for every $(f,h)\in{}^\kappa{2}$ the set $\{\alpha\in\kappa\mid c(f\upharpoonright\alpha,h\upharpoonright\alpha)=g(\alpha)\}$ is a stationary subset of $\kappa$.
The existence of $g$ follows from $(\beth)$.

Let $M$ be a recursively saturated model of size $\kappa$.
Assume towards contradiction that $\kappa$ is recursively amenable.
Based on the assumption toward contradiction, we intend to build a $\kappa^+$-Kaufmann model that extends $M$.
To this end, we define a model $M_\eta$ for every $\eta\in{}^{<\kappa^+}2$ as follows.
We let $M_{\langle\rangle}=M$.
Lest $\ell{g}(\eta)$ is a limit ordinal we define $M_\eta=\bigcup_{\zeta\triangleleft\eta}M_\zeta$.
For the successor step, assume that $M_\eta$ is already defined.
Let $M_{\eta^\smallfrown 0}$ and $M_{\eta^\smallfrown 1}$ be recursively saturated elementary end extensions of $M_\eta$ such that ${\rm Cod}(M_{\eta^\smallfrown 0}/M_\eta)\cap{\rm Cod}(M_{\eta^\smallfrown 1}/M_\eta)={\rm Def}(M_\eta)$.
This is possible since $\kappa$ is recursively amenable.
We also assume, without loss of generality, that if $\ell{g}(\eta)$ is a limit ordinal $\delta$ then the universe of $M_\eta$ is $\delta$.\footnote{This can be arranged at a club of $\delta\in\kappa$, so without loss of generality it holds everywhere.}

Suppose that $\alpha\in\kappa^+, \eta\in{}^\alpha{2}$ and $\zeta\in{}^\alpha{2}$.
Let $X_\zeta$ be the set $\{\beta\in\alpha\mid\zeta(\beta)=1\}$.
Define $c(\zeta,\eta)=0$ if $X_\zeta\in{\rm Cod}(M_{\eta^\smallfrown 1}/M_\eta)-{\rm Def}(M_\eta)$ and $c(\zeta,\eta)=1$ if $X_\zeta\in{\rm Cod}(M_{\eta^\smallfrown 0}/M_\eta)-{\rm Def}(M_\eta)$.
In all other cases let $c(\zeta,\eta)=0$.

Let $g\in{}^{\kappa^+}2$ be a weak diamond function for $c$.
Our desired model is $N=\bigcup_{\alpha\in\kappa^+}M_{g\upharpoonright\alpha}$.
The cardinality of $N$ is $\kappa^+$ since we add a new element at every successor step along the branch $g\upharpoonright\alpha$.
On the other hand, if $a\in{N}$ and $\gamma+1$ is the first ordinal for which $a\in M_{\gamma+1}$ then $a_N\subseteq M_{\gamma+1}$ and hence $|a_N|\leq\kappa$.
We conclude, therefore, that $N$ is $\kappa^+$-like.

If $p(\bar{x},\bar{b})$ is a recursive type in $N$ then $\bar{b}\in M_{g\upharpoonright\alpha}$ for some $\alpha\in\kappa^+$, so $p(\bar{x},\bar{b})$ is a recursive type in $M_{g\upharpoonright\alpha}$ and hence realized in $M_{g\upharpoonright\alpha}$.
Since $M_{g\upharpoonright\alpha}\prec{N}$, $p(\bar{x},\bar{b})$ is realized in $N$ as well, thus $N$ is recursively saturated.

Suppose that $X\subseteq{N}$ is a class, and let $f$ be the characteristic function of $X$.
For every $\alpha\in\kappa^+$ let $X_\eta=M_\eta\cap{X}$, where $\eta=g\upharpoonright\alpha$.
Observe that $(M_\eta,X_\eta)\prec(N,X)$ for a club of $\alpha\in\kappa^+$.
Let $\alpha\in\kappa^+$ be a limit ordinal for which $(M_\eta,X_\eta)\prec(N,X)$ and $c(f\upharpoonright\alpha,g\upharpoonright\alpha)=g(\alpha)$.
Denote $f\upharpoonright\alpha$ by $\zeta$ and $g\upharpoonright\alpha$ by $\eta$, so $c(\zeta,\eta)=g(\alpha)$.

We claim that $X_\eta$ is definable in $M_\eta$.
For suppose not.
By the construction, there is $\ell\in\{0,1\}$ so that $X_\eta\in{\rm Cod}(M_{\eta^\smallfrown\ell}/M_\eta)-{\rm Def}(M_\eta)$.
It follows that $g(\alpha)=\ell$.
To see this, observe first that $N$ extends $M_{g(\alpha)}$ by definition.
Now $X_\eta\in{\rm Def}(N)$ and hence $X_\eta\in{\rm Def}(M_{g\upharpoonright\delta})$ for some $\delta\in\kappa^+$.
Necessarily $M_{g\upharpoonright\delta}$ extends $M_{\eta^\smallfrown\ell}$, since $X_\eta\in{\rm Cod}(M_{\eta^\smallfrown\ell}/M_\eta)$.
The latter is true since $M_{g\upharpoonright\delta}$ cannot extend $M_{\eta^\smallfrown 1-\ell}$ by conservativity, bearing in mind that $X_\eta\notin{\rm Cod}(M_{\eta^\smallfrown 1-\ell}/M_\eta)$.
Together, $g(\alpha)=\ell$.
On the other hand, $c(\zeta,\eta)=1-\ell$ since $X_\eta\in{\rm Cod}(M_{\eta^\smallfrown\ell}/M_\eta)-{\rm Def}(M_\eta)$, a contradiction to $(\aleph)$.

We conclude, therefore, that $X_\eta\in{\rm Def}(M_\eta)$.
By the choice of $\alpha$ we know that $(M_\eta,X_\eta)\prec(N,X)$, and hence $X\in{\rm Def}(N)$.
This means that $N$ is rather classless.
Having established all three desired properties we infer that $N$ is a $\kappa^+$-Kaufmann model.
By \cite[Theorem 3]{MR619874}, there exists a $\kappa^+$-Aronszajn tree, a contradiction.

It follows that $\kappa$ is not recursively amenable.
In particular, if one begins with any model $M$ of size $\kappa$ and builds a tree of extensions as described, then at some point there will be a model of size $\kappa$ with no splendid extension.

\hfill \qedref{thmtreepropertyandwd}

From the main theorem of this section we can derive the following conclusion.

\begin{corollary}
  \label{cornegsplendid} Let $\kappa$ be a successor of a regular cardinal and assume there is a weakly compact cardinal above $\kappa$.
  Then it is consistent that there is a model $M$ of size $\kappa$ with no splendid extension.
  Moreover, every model $M'$ of cardinality $\kappa$ extends to a model $M$ of the same cardinality with no splendid extension.
\end{corollary}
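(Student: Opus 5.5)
The corollary follows from Theorem \ref{thmtreepropertyandwd} once we have a forcing extension in which both hypotheses hold. Write $\kappa=\mu^+$ with $\mu$ regular, and let $\lambda>\kappa$ be weakly compact. The target model must satisfy
\begin{itemize}
\item[(i)] $\kappa^+$ has the tree property, and
\item[(ii)] $2^\kappa<2^{\kappa^+}$, which yields $\Phi_{\kappa^+}$ as recalled after Definition \ref{defweakdiamond}.
\end{itemize}
Theorem \ref{thmtreepropertyandwd} then shows that $\kappa$ is not recursively amenable. So some recursively saturated model of size $\kappa$ has no splendid extension.

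For (i) I would use Mitchell's forcing $\mathbb{M}(\kappa,\lambda)$, in the generalized form that works above any regular $\kappa$ with $\kappa^{<\kappa}=\kappa$. First, if necessary, force \textsf{GCH} (or at least $\kappa^{<\kappa}=\kappa$) by a preliminary forcing that preserves the weak compactness of $\lambda$, for instance a reverse Easton iteration. Mitchell's forcing then makes $\lambda=\kappa^{++}$, i.e.\ the successor of the new $\kappa^+$, and gives it the tree property. Here I need the relabelling to be straight: the theorem applied at $\kappa$ needs the tree property at $\kappa^+$, and Mitchell collapses $\lambda$ to become $\kappa^{++}$, which is the wrong cardinal. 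So I would instead apply Mitchell's forcing at the regular cardinal $\mu$, turning $\lambda$ into $\mu^{++}=\kappa^+$. This is exactly why the corollary assumes $\kappa$ is the successor of a regular cardinal. The forcing preserves $\mu$ and $\mu^+=\kappa$, collapses the cardinals between $\kappa$ and $\lambda$, and yields $2^\mu=\lambda=\kappa^+$ together with the tree property at $\kappa^+$.

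For (ii), the plan is to show that in Mitchell's model $2^\kappa=\kappa^+$ while $2^{\kappa^+}>\kappa^+$.
\begin{itemize}
\item Mitchell's forcing has size $\lambda$ and is $\lambda$-c.c. With \textsf{GCH} in the ground model, counting nice names gives $2^{\kappa}\le\lambda^{\kappa}=\lambda=\kappa^+$ in the extension.
\item Alternatively, to be safe, follow Mitchell's forcing with $\mathrm{Add}(\kappa^+,\kappa^{+++})$. This forcing is $\kappa^+$-closed, so it adds no new subsets of $\kappa$ and no new branches of height $\kappa^+$ through ground-model trees.
\end{itemize}

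Preservation of the tree property under this extra Cohen forcing is the main obstacle. Closure alone does not obviously suffice, because new $\kappa^+$-Aronszajn trees could be added. What I would try first is to absorb the Cohen forcing into the Mitchell construction, using the quotient and branch-lemma analysis as in the standard argument that Mitchell's model admits arbitrarily large $2^{\kappa^+}$. The fallback is simply to observe that \textsf{GCH}-type counting already gives $2^\kappa=\kappa^+<2^{\kappa^+}$ in the plain Mitchell model.

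For the ``moreover'' clause: given any $M'$ of size $\kappa$, first replace it by a recursively saturated elementary extension of size $\kappa$. Then run the tree construction from the proof of Theorem \ref{thmtreepropertyandwd} with root $M'$. The construction cannot continue for $\kappa^+$ stages, so some node $M_\eta\succ M'$ of size $\kappa$ is recursively saturated and has no splendid pair. That node is the required $M$.
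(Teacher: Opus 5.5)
Your proposal is correct and follows the paper's route. You apply Mitchell forcing at $\mu$, so that $\lambda$ becomes $\mu^{++}=\kappa^+$ with the tree property and $2^\kappa=\kappa^+$; this yields $\Phi_{\kappa^+}$, so Theorem \ref{thmtreepropertyandwd} applies, and the ``moreover'' clause comes from the splendid tree over $M'$ getting stuck. The detour through $\mathrm{Add}(\kappa^+,\kappa^{+++})$ is unnecessary: $2^{\kappa^+}>\kappa^+=2^\kappa$ already holds by Cantor's theorem in the plain Mitchell model, and that is exactly what the paper uses.
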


\par\noindent\emph{Proof}. \newline
Using Mitchell forcing from \cite{MR313057} one obtains a universe in which there are no $\kappa^+$-Aronszajn trees.
In this model, $2^\kappa=\kappa^+<2^{\kappa^+}$ and hence $\Phi_{\kappa^+}$ holds.
It follows from Theorem \ref{thmtreepropertyandwd} that there is a model $M$ of size $\kappa$ with no splendid extension, as sought.
For the additional statement of the corollary, if $|M'|=\kappa$ then the splendid tree over $M'$ cannot be of height $\kappa^+$.
Hence at some stage, $M'\subseteq_{e}M$ where $|M|=\kappa$ and $M$ has no splendid extensions.

\hfill \qedref{cornegsplendid}

The above corollary applies, in particular, to $\kappa=\aleph_1$.
However, the case of $\aleph_1$ is somewhat less interesting.
The reason is that if $M$ is a Kaufmann model then there is no recursively saturated elementary end-extension of $M$ at all, see \cite[Corollary 10.1.6]{MR2250469}, so the statement is vacuous for these models.
It is still interesting since it says that \emph{every} model of size $\aleph_1$ extends to a model with no splendid extension, but it becomes much more interesting if there are no Kaufmann models at some cardinality, and this is not the case with $\aleph_1$ of course.
Thus the full strength of our theorem is demonstrated by the following:

\begin{theorem}
  \label{thmdoubletreeproperty} Assume there exists a supercompact cardinal and a weakly compact cardinal above it.
  Then one can force the following two statements simultaneously:
  \begin{enumerate}
    \item [$(\aleph)$] There are no Kaufmann models of size $\aleph_2$.
    \item [$(\beth)$] Every model of size $\aleph_2$ extends to a model of the same size with no splendid extension.
  \end{enumerate}
\end{theorem}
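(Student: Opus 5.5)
We need a model where $\aleph_2$ has the tree property (so there are no $\aleph_2$-Kaufmann models, by Schmerl's \cite[Theorem 3]{MR619874}) and simultaneously $\aleph_3$ has the tree property together with $2^{\aleph_2}<2^{\aleph_3}$. Theorem \ref{thmtreepropertyandwd} with $\kappa=\aleph_2$ then yields $(\beth)$. The plan is therefore to force the tree property at two consecutive cardinals $\aleph_2$ and $\aleph_3$, while keeping $2^{\aleph_2}<2^{\aleph_3}$.

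\emph{The forcing.} Let $\lambda$ be supercompact and $\mu>\lambda$ weakly compact; we may first arrange GCH in the ground model by a preparatory forcing preserving these large cardinals. Apply the Abraham/Cummings--Foreman style iteration of Mitchell forcings. First a Mitchell forcing $\mathbb{M}(\aleph_0,\lambda)$ collapses $\lambda$ to $\aleph_2$ with $2^{\aleph_0}=\aleph_2$. Then a second Mitchell forcing $\mathbb{M}(\aleph_1,\mu)$, defined in the extension, makes $\mu=\aleph_3$ with $2^{\aleph_1}=\aleph_3$. Abraham's original argument for the tree property at $\aleph_2$ and $\aleph_3$ simultaneously used a supercompact and a weakly compact above it, which matches the hypothesis exactly.

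\emph{Verifying the tree properties.}
\begin{enumerate}
\item[(i)] The tree property at $\aleph_3$ comes from weak compactness of $\mu$ via the standard Mitchell argument: the second stage is a projection of a product of a $\aleph_2$-closed forcing and a $\aleph_2$-c.c.\ (Cohen) forcing, and branches are pulled back using an embedding $j:M\to N$ with critical point $\mu$.
\item[(ii)] The tree property at $\aleph_2$ must survive the second stage. Here supercompactness of $\lambda$ is used: lift a $\mu$-supercompact (or sufficiently large) embedding through the first stage, then argue that the tail forcing adds no branches to $\aleph_2$-trees, using that the relevant quotients are $\aleph_1$-closed times Knaster.
\end{enumerate}
Step (ii) is the main obstacle; I would quote Abraham's theorem rather than reprove it.

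\emph{Cardinal arithmetic and conclusion.} In the final model $2^{\aleph_2}=\aleph_3$, since the second stage has size $\mu$ and GCH held above. To obtain $2^{\aleph_3}>\aleph_3$, add $\aleph_4$ Cohen subsets of $\aleph_3$ afterwards, or simply note that $2^{\aleph_3}\geq\aleph_4$ holds trivially by Cantor. Thus $2^{\aleph_2}=\aleph_3<2^{\aleph_3}$ with no further work, and $\Phi_{\aleph_3}$ holds. Now:
\begin{enumerate}
\item[$(\aleph)$] Since $\aleph_2$ has the tree property, Schmerl's result gives that there are no $\aleph_2$-Kaufmann models.
\item[$(\beth)$] Since $\aleph_3$ has the tree property and $2^{\aleph_2}<2^{\aleph_3}$, Theorem \ref{thmtreepropertyandwd} shows that $\aleph_2$ is not recursively amenable.
\end{enumerate}
For the "every model" clause in $(\beth)$, argue as in Corollary \ref{cornegsplendid}. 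Given $M'$ of size $\aleph_2$, attempt to build the splendid tree over it. If the construction reached height $\omega_3$, the proof of Theorem \ref{thmtreepropertyandwd} would produce an $\aleph_3$-Kaufmann model and hence an $\aleph_3$-Aronszajn tree. So the construction halts at some node $M\supseteq_e M'$ with $|M|=\aleph_2$ and no splendid extension. If $M'$ is not recursively saturated, first replace it by a recursively saturated elementary end extension of size $\aleph_2$.
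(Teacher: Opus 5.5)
Your argument is the paper's: get the tree property at $\aleph_2$ and $\aleph_3$ simultaneously from Abraham's theorem \cite{MR717829}, note that $2^{\aleph_2}=\aleph_3<2^{\aleph_3}$ so that $\Phi_{\aleph_3}$ holds, and conclude with Schmerl's theorem \cite{MR619874} for $(\aleph)$, and with Theorem~\ref{thmtreepropertyandwd} plus the tree-over-$M'$ argument of Corollary~\ref{cornegsplendid} for $(\beth)$. Your explicit cardinal arithmetic (GCH in the ground model, a forcing of size $\mu$ with the $\mu$-c.c., then Cantor) correctly fills in what the paper leaves implicit. The proof is sound provided Abraham's theorem is used as a black box, as the paper does.

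Drop your description of the forcing, which is not Abraham's and fails as written. After $\mathbb{M}(\aleph_0,\lambda)$ you have $2^{\aleph_0}=\aleph_2$. The Cohen part ${\rm Add}(\omega_1,\mu)$ of an $\mathbb{M}(\aleph_1,\mu)$ defined in that model then collapses $\aleph_2$: a single Cohen subset $g$ of $\omega_1$ added with countable conditions maps $\omega_1$ onto the old reals via $\alpha\mapsto g\upharpoonright[\alpha,\alpha+\omega)$. So that Cohen part is not $\aleph_2$-c.c., your item (i) is false for this iteration, and $\lambda$ does not survive as $\aleph_2$. This is exactly why Abraham's construction is more delicate. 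The subsets of $\omega_1$ must be added by a poset that does not collapse the continuum, e.g.\ by taking the Cohen part from the ground model, ${\rm Add}(\omega_1,\mu)^V$, which remains $\lambda$-c.c.\ over the first extension. The supercompactness of $\lambda$ is what lets the tree property at $\aleph_2$ survive the second stage.

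A minor point concerns the ``every model'' clause. A recursively saturated elementary end extension of $M'$ need not exist: if $M'$ is rather classless, such an extension would be conservative over $M'$, which recursive saturation forbids. In that case, however, $M'$ itself has no splendid extension. Otherwise you can start the tree at $M'$ directly, since all later nodes are recursively saturated anyway.
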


\par\noindent\emph{Proof}. \newline
Force the tree property at $\aleph_2$ and $\aleph_3$ together, by \cite{MR717829}.
In the generic extension, there are no $\aleph_2$-Kaufmann models since $\aleph_2$ has the tree property.
Likewise, $\Phi_{\aleph_3}$ and the tree property at $\aleph_3$ hold, thus every model of size $\aleph_2$ extends to a model of the same size with no splendid extension.

\hfill \qedref{thmdoubletreeproperty}

The above argument is applicable at every pair of cardinals of this form.
It is also applicable at successors of singular cardinals by \cite{MR1420265}, but for this forcing construction one needs stronger assumptions of large cardinals in the ground model.

Further progress about the tree property yielded longer intervals of successor cardinals with the tree property.
By \cite{MR1492784} or \cite{MR3224975}, it is consistent that $\aleph_n$ has the tree property for every $n\geq{2}$.
Moreover, in these forcing constructions, $2^{\aleph_n}=\aleph_{n+2}$ for every $n\geq{1}$.
Thus weak diamond holds\footnote{But notice that diamond fails (unlike the parallel situation in Mitchell's model).} along the $\aleph_n$'s for $n\geq{2}$.
It follows that there are models of size $\aleph_n$ with no splendid extension at every $\aleph_n$ where $n\geq{2}$.

\newpage

\section{Large cardinals}

The results in the previous section were obtained by forcing, and thus the existence of models with no splendid extension is only a consistency result.
In this section we show that something similar holds without forcing if there are sufficiently strong concepts of large cardinals at our disposal.

Our strategy would be similar: we wish to combine weak diamonds with the tree property.
Observe that weak diamond holds, in \textsf{ZFC}, at a proper class of cardinals, as shown in \cite{MR4611828}.
This is not the case, however, with respect to the tree property.
But if $\kappa$ is a sufficiently large cardinal then (unlike the case of successor cardinals) one regains the desired properties.

\begin{definition}
  \label{defsubtle} A cardinal $\kappa$ is ineffable if every coloring $c:[\kappa]^2\rightarrow{2}$ admits a monochromatic stationary subset $S$ of $\kappa$.
\end{definition}

An ineffable cardinal is weakly compact, and hence satisfies the tree property.
Likewise, if $\kappa$ is ineffable then $\Diamond_\kappa$ holds, so a fortiori $\Phi_\kappa$ holds.\footnote{A weaker concept of large cardinals that satisfies diamond is a subtle cardinal. But subtle cardinals are not automatically weakly compact, thus the tree property is not guaranteed.}
For the main theorem of this section we redefine the concept of recursive amenability by dropping the requirement that the models in a splendid extension of $M$ have the same cardinality as $M$.

\begin{definition}
  \label{defthetakappa} Let $\kappa$ be a regular cardinal and let $\theta<\kappa$.
  One says that $\kappa$ is $\theta$-recursively amenable if every recursively saturated model $M$ whose cardinality belongs to $[\theta,\kappa)$ has a splendid extension $(M_0,M_1)$ so that $|M|\leq|M_0|,|M_1|<\kappa$.
\end{definition}

Our goal is to show that ineffable cardinals do not satisfy the above definition.
It will follow that models of size less than $\kappa$ with no splendid extension exist, if $\kappa$ is ineffable.

\begin{theorem}
  \label{thmineffable} Let $\kappa$ be an ineffable cardinal and let $\theta<\kappa$.
  Then $\kappa$ is not $\theta$-recursively amenable.
  Hence there exists a model $M$ of size $\lambda\in[\theta,\kappa)$ with no splendid extension.
\end{theorem}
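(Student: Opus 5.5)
We argue by contradiction, imitating the proof of Theorem \ref{thmtreepropertyandwd} with $\kappa^+$ replaced by the ineffable $\kappa$. Assume $\kappa$ is $\theta$-recursively amenable. Fix a recursively saturated model $M$ of size $\theta$; one exists by a standard argument, taking $\theta\geq\aleph_0$. We build a binary tree of models $M_\eta$, $\eta\in{}^{<\kappa}2$, with $M_{\langle\rangle}=M$, unions at limit stages, and at successor stages a splendid pair $(M_{\eta^\smallfrown 0},M_{\eta^\smallfrown 1})$ over $M_\eta$ of cardinality in $[|M_\eta|,\kappa)$, supplied by $\theta$-recursive amenability.

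The first point to check is that the construction never leaves the interval $[\theta,\kappa)$. At successor stages $|M_{\eta^\smallfrown\ell}|<\kappa$ by the definition of $\theta$-recursive amenability. At a limit stage of length $\delta<\kappa$ we have a union of $|\delta|$ models, each of size less than $\kappa$, so its size is less than $\kappa$ by regularity of $\kappa$. The size is at least $\theta$ because we only take end extensions of $M$. Limit models are recursively saturated, since a recursive type has parameters in some earlier $M_\zeta$ and is realized there, and $M_\zeta\prec M_\eta$. Hence the hypothesis applies at every node. By thinning to a club and re-indexing, we may also assume that the universe of $M_\eta$ is a subset of $\kappa$ for every $\eta$, and that it equals an ordinal at a club of limit lengths.

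Next, we use the fact that ineffability gives $\Diamond_\kappa$, hence $\Phi_\kappa$, which is the weak diamond principle of Definition \ref{defweakdiamond}. After pairing, we define the colouring $c(\zeta,\eta)$ exactly as in the proof of Theorem \ref{thmtreepropertyandwd}, obtain a weak diamond function $g$, and put $N=\bigcup_{\alpha<\kappa}M_{g\upharpoonright\alpha}$. Then $N$ has size $\kappa$. It is $\kappa$-like, since each element appears in some $M_{g\upharpoonright\alpha}$ of size less than $\kappa$, and all later models are end extensions, so $a_N$ is contained in that model. It is recursively saturated by the same argument as in the limit case. The rather-classless verification is copied verbatim from the proof of Theorem \ref{thmtreepropertyandwd}. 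Given a class $X$, there is a club of $\alpha$ with $(M_\eta,X\cap M_\eta)\prec(N,X)$, where $\eta=g\upharpoonright\alpha$, and the stationary set of guesses meets this club. At such an $\alpha$, if $X\cap M_\eta$ were not definable in $M_\eta$, it would be coded in exactly one of $M_{\eta^\smallfrown 0}$ and $M_{\eta^\smallfrown 1}$, by the splendid condition. Since $X\cap M_\eta$ is definable in $N$, the branch $g$ must pass through the side that codes it, and this contradicts the choice of $c$. So $X\cap M_\eta$ is definable in $M_\eta$, and by elementarity $X$ is definable in $N$. Thus $N$ is $\kappa$-Kaufmann.

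Finally, we need a contradiction with ineffability. By \cite[Theorem 3]{MR619874}, the existence of a $\kappa$-Kaufmann model yields a $\kappa$-Aronszajn tree. But $\kappa$ is weakly compact, so it has the tree property, and this is a contradiction. Hence $\kappa$ is not $\theta$-recursively amenable. Unwinding the definition, some recursively saturated $M'$ with $|M'|=\lambda\in[\theta,\kappa)$ has no splendid pair of size in $[\lambda,\kappa)$. The main obstacle I expect is the final clause, namely that such an $M'$ has \emph{no} splendid extension at all. Under Definition \ref{defsplendid}, a splendid pair over $M'$ must have size exactly $|M'|=\lambda<\kappa$, so any splendid extension in the original sense already lies in $[\lambda,\kappa)$. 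Therefore the failure of $\theta$-amenability at $M'$ directly gives a model of size $\lambda$ with no splendid extension.
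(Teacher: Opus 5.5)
Your proposal is correct and is essentially the paper's proof. It uses the same binary tree of splendid pairs of height $\kappa$, picks a $\Phi_\kappa$-guessed branch whose union is $\kappa$-Kaufmann, and gets a contradiction with the tree property of the weakly compact $\kappa$ via Schmerl's theorem; your extra checks (limit stages stay recursively saturated of size in $[\theta,\kappa)$, and the final reconciliation with Definition \ref{defsplendid}) fill in what the paper leaves implicit. Taking the colouring from Theorem \ref{thmtreepropertyandwd}, which guesses the side that does \emph{not} code $X_\zeta$, is exactly right: the colouring literally written in the paper's proof of this theorem ($c(\zeta,\eta)=\ell$ for the coding side $\ell$) agrees with the branch and gives no contradiction. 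In that step you should also say that $X\cap M_\eta$ is \emph{coded} in $N$, hence in $M_{\eta^\smallfrown g(\alpha)}$ because $M_\eta\subsetneq_e M_{\eta^\smallfrown g(\alpha)}\prec_e N$, rather than definable in $N$.
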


\par\noindent\emph{Proof}. \newline
Since $\kappa$ is ineffable, it is weakly compact and hence the tree property holds at $\kappa$.
From \cite{MR619874} it follows that there are no $\kappa$-Kaufmann models.
Assume towards contradiction that $\kappa$ is $\theta$-recursively amenable.
As in the successor case, for every $\eta\in{}^{<\kappa}2$ we shall define a model $M_\eta$ as follows.
Choose any recursively saturated model $M$ of size $\theta$ and let $M_{\langle\rangle}=M$.
If $\ell{g}(\eta)$ is a limit ordinal, let $M_\eta$ be the union $\bigcup_{\zeta\triangleleft\eta}M_\zeta$.
At the successor step, assuming that $M_\eta$ is already at hand, let $(M_{\eta^\smallfrown 0},M_{\eta^\smallfrown 1})$ be a splendid extension of $M_\eta$.
As usual, we may assume that if $\ell{g}(\eta)=\delta$ is a limit ordinal then $\delta$ is the universe of $M_\eta$.

Recall that $\Phi_\kappa$ holds since $\kappa$ is ineffable.
We define a coloring $c:{}^{<\kappa}2\times{}^{<\kappa}2\rightarrow{2}$ as follows.
Given $\alpha\in\kappa$ and $\zeta,\eta\in{}^\alpha{2}$ we let $X_\zeta$ be the characteristic function of $\zeta$.
Now $c(\zeta,\eta)=\ell$ if $X_\zeta\in{\rm Cod}(M_{\eta^\smallfrown\ell}/M_\eta)-{\rm Def}(M_\eta)$, where $\ell\in\{0,1\}$.
If $c(\zeta,\eta)$ is not defined by the above rule, let $c(\zeta,\eta)=0$.

Pick a weak diamond function $g\in{}^\kappa{2}$.
Let $N=\bigcup_{\alpha\in\kappa}M_{g\upharpoonright\alpha}$.
As in the main theorem of the previous section, $N$ is $\kappa$-like and recursively saturated.
Likewise, $N$ is rather classless and hence $\kappa$-Kaufmann.
But this contradicts the tree property at $\kappa$.
We conclude, therefore, that $\kappa$ is not $\theta$-recursively amenable, as sought.
The additional conclusion of the theorem follows.

\hfill \qedref{thmineffable}

As in the previous section, we actually get something stronger:

\begin{corollary}
  \label{coreverymodel} Let $\kappa$ be an ineffable cardinal.
  If $|M'|<\kappa$ then $M'$ extends to a model $M$ of size less than $\kappa$ with no splendid extension.
\end{corollary}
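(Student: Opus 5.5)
The plan is to run the tree construction from the proof of Theorem \ref{thmineffable}, but with $M'$ as the root, and to show that it must break down at some node. Fix $M'$ with $|M'|<\kappa$. If $M'$ is not recursively saturated, first replace it by a recursively saturated elementary end extension of the same cardinality plus $\aleph_0$. In the countable case this is given by Proposition \ref{propks}. In general it can be obtained by a MacDowell--Specker style resplendency argument, or by taking a recursively saturated elementary extension of size $|M'|+\aleph_0$ and then the end-extension part. If this step causes trouble, I will restrict the statement to recursively saturated $M'$, in line with Definition \ref{defthetakappa}. Put $\theta=|M'|+\aleph_0<\kappa$.

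Now assume toward contradiction that every model $M$ with $M'\preceq_e M$ and $|M|<\kappa$ has a splendid extension $(M_0,M_1)$. By Definition \ref{defsplendid}, such an extension satisfies $|M_0|=|M_1|=|M|<\kappa$.

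Build $M_\eta$ for $\eta\in{}^{<\kappa}2$ exactly as in the proof of Theorem \ref{thmineffable}:
\begin{itemize}
  \item set $M_{\langle\rangle}=M'$;
  \item at successor nodes, let $(M_{\eta^\smallfrown 0},M_{\eta^\smallfrown 1})$ be a splendid extension of $M_\eta$;
  \item at limit nodes, take unions.
\end{itemize}
The key check is that every $M_\eta$ is an elementary end extension of $M'$ of cardinality less than $\kappa$. This holds at limit stages because $\kappa$ is regular and each earlier model has size less than $\kappa$, so the union has size at most $|\ell g(\eta)|\cdot\sup_{\zeta}|M_\zeta|<\kappa$. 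Hence the hypothesis applies at every node, and the construction runs through all of ${}^{<\kappa}2$. Recursive saturation is preserved at successors by splendidity, and at limits because a union of a chain of recursively saturated elementary extensions is recursively saturated.

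Next, apply $\Phi_\kappa$, which holds since $\kappa$ is ineffable, to the coloring $c$ from Theorem \ref{thmineffable}, and obtain a weak diamond function $g$. Let $N=\bigcup_{\alpha<\kappa}M_{g\upharpoonright\alpha}$. The arguments of Theorem \ref{thmtreepropertyandwd} then show that $N$ is $\kappa$-Kaufmann:
\begin{itemize}
  \item $N$ is $\kappa$-like, because each proper end-segment bound is contained in some $M_{g\upharpoonright\alpha}$ of size less than $\kappa$, while $|N|=\kappa$ since new elements appear at each successor stage;
  \item $N$ is recursively saturated, by the same argument as before;
  \item $N$ is rather classless, by the weak diamond and conservativity-of-intersection argument.
\end{itemize}
This contradicts the tree property at $\kappa$, which holds because $\kappa$ is weakly compact, via \cite[Theorem 3]{MR619874}. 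So some node $M=M_\eta$ has no splendid extension, and by construction $M'\preceq_e M$ and $|M|<\kappa$.

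I expect the main obstacle to be the rather-classlessness step for $N$. In particular, the claim that $X_\eta\in{\rm Def}(N)$ forces the branch $g$ to pass through $M_{\eta^\smallfrown\ell}$ needs $M_{g\upharpoonright\delta}$ to be definable-over-$M_\eta$ compatible with the splendid pair at $\eta$. I will first try to justify this by noting that ${\rm Cod}(M_{g\upharpoonright\delta}/M_\eta)\subseteq{\rm Cod}(M_{\eta^\smallfrown g(\alpha)}/M_\eta)$. This inclusion should hold because later end extensions only code, over $M_\eta$, sets already coded in $M_{\eta^\smallfrown g(\alpha)}$, since $M_{\eta^\smallfrown g(\alpha)}\prec_e M_{g\upharpoonright\delta}$ and $M_\eta$ is an initial segment of it. The other thing to handle is arranging the universes to be ordinals on a club, which is needed for the coloring.
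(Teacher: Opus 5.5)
Your argument is the one the paper intends. The corollary is stated without proof, and it follows exactly as the second half of Corollary \ref{cornegsplendid}: run the tree of splendid pairs from Theorem \ref{thmineffable} with root $M'$, and observe that it cannot reach height $\kappa$. Your key inclusion is also correct. For $b\in M_{\eta^\smallfrown g(\alpha)}\setminus M_\eta$ and a class $X$ of $N$, the set $X\cap[0,b)$ is definable in $N$ and coded by some $s<2^b$. Hence $s\in M_{\eta^\smallfrown g(\alpha)}$ and $X_\eta\in{\rm Cod}(M_{\eta^\smallfrown g(\alpha)}/M_\eta)$. Note that what you actually have is $X_\eta\in{\rm Cod}(N/M_\eta)$, not $X_\eta\in{\rm Def}(N)$, since the cut $M_\eta$ is not definable.

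Two points need correcting.

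First, drop the preliminary step that makes $M'$ recursively saturated. It is unnecessary: only the successor nodes need to be recursively saturated, and they are by the definition of a splendid pair. Your justifications for it also fail. Proposition \ref{propks} presupposes recursive saturation. Moreover, suppose $M'$ omits a recursive type $p(x,\bar b)$ containing $x<b_0$, with $\bar b,b_0\in M'$; this happens, for instance, if $M'$ is nonstandard and finitely generated. Then no elementary end extension of $M'$ is recursively saturated, because a realization of $p$ there would lie below $b_0$, hence in $M'$, and would realize $p$ in $M'$ by elementarity. In that case $M'$ and all its elementary end extensions trivially have no splendid extension, so there is no reason to restrict the statement.

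Second, do not take the coloring literally as printed in Theorem \ref{thmineffable}, which sets $c(\zeta,\eta)=\ell$ when $X_\zeta\in{\rm Cod}(M_{\eta^\smallfrown\ell}/M_\eta)-{\rm Def}(M_\eta)$. At a guessing point where $X_\eta$ is nondefinable, $X_\eta$ is coded in $M_{\eta^\smallfrown g(\alpha)}$. So this coloring gives $c(\zeta,\eta)=g(\alpha)$, which agrees with the weak-diamond guess and yields no contradiction. You need the coloring of Theorem \ref{thmtreepropertyandwd}, which assigns the opposite value $1-\ell$.

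Finally, as you anticipated, the sizes of the $M_\eta$ can exceed $|\ell g(\eta)|$. So you can only arrange that the universe of $M_{g\upharpoonright\alpha}$ is $\alpha$ on a club of $\alpha$. Make all universes ordinals and take the fixed points of the continuous increasing map $\alpha\mapsto$ universe of $M_{g\upharpoonright\alpha}$. Define $c$ arbitrarily elsewhere, and intersect this club with the stationary guessing set.
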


\hfill \qedref{coreverymodel}

The existence of models of uncountable cardinality with no splendid extension, as proved in the above theorem, is carried-out in \textsf{ZFC} (and the assumption that there is an ineffable cardinal).
However, the proof does not specify the cardinal $\lambda$ on which there is such a model.
The theorem of the previous section is proved by forcing, but it gives such models at prescribed specific cardinals.
One interesting aspect of ineffability is that this large cardinal does not contradict the axiom of constructibility.

\begin{corollary}
  \label{corv=l} If $V=L$ and there exists an ineffable cardinal $\kappa$, then for some $\lambda\in\kappa$ there are models of cardinality $\lambda$ with no splendid extension.
\end{corollary}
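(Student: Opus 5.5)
The plan is to obtain Corollary \ref{corv=l} as a direct specialization of Theorem \ref{thmineffable}, and to check that the hypothesis $V=L$ does not interfere with anything used there. Fix an ineffable cardinal $\kappa$ and take $\theta=\aleph_0$, or indeed any $\theta<\kappa$. Theorem \ref{thmineffable} then says that $\kappa$ is not $\theta$-recursively amenable. Unwinding Definition \ref{defthetakappa}, this produces a recursively saturated model $M$ with $|M|=\lambda\in[\theta,\kappa)$ that has no splendid pair $(M_0,M_1)$ with $|M|\leq|M_0|,|M_1|<\kappa$. In particular (Definition \ref{defsplendid}) $M$ has no splendid pair of size exactly $\lambda$. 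So $\lambda$ is the required cardinal.

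The only work is to confirm that each ingredient of the proof of Theorem \ref{thmineffable} is a \textsf{ZFC} fact about the ineffable $\kappa$, so that it holds inside $L$. I would check three ingredients.
\begin{enumerate}
  \item[(a)] Ineffability implies weak compactness, hence the tree property at $\kappa$. By Schmerl's theorem \cite[Theorem 3]{MR619874} there are then no $\kappa$-Kaufmann models.
  \item[(b)] Ineffability implies $\Diamond_\kappa$, hence $\Phi_\kappa$. Under $V=L$ this is even more immediate, since $\Diamond_\kappa$ holds at every regular uncountable $\kappa$.
  \item[(c)] The tree construction and the weak diamond argument are carried out verbatim as in Theorem \ref{thmtreepropertyandwd}.
\end{enumerate}
Since all three are \textsf{ZFC} theorems, nothing beyond \textsf{ZFC} plus ``$\kappa$ is ineffable'' is needed. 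The remaining point is consistency: ineffability relativizes downward to $L$, because stationarity and colorings in $L$ are witnessed by fewer sets. So the hypothesis ``$V=L$ and there is an ineffable cardinal'' is consistent relative to an ineffable, and the corollary is not vacuous.

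The main obstacle is the mismatch between the two notions of ``no splendid extension.'' Theorem \ref{thmineffable} refutes $\theta$-amenability, which only forbids splendid pairs of size below $\kappa$, and splendid pairs must have size $|M|$ by Definition \ref{defsplendid}. I would therefore state the conclusion exactly as in Theorem \ref{thmineffable}: a model of cardinality $\lambda$ with no splendid pair in the sense of Definition \ref{defsplendid}. This follows because any such pair would have size $\lambda<\kappa$ and would witness amenability at $M$.

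To get plural ``models,'' I would apply Corollary \ref{coreverymodel} to different starting models $M'$. Alternatively, I would note that expanding the starting model along the construction yields many such $M$. I expect possibly many $\lambda$ to arise, but the statement only requires one.
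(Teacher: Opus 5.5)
Your proposal is correct and follows the paper's route: the corollary is just Theorem \ref{thmineffable} applied inside $L$, since that theorem uses nothing beyond \textsf{ZFC} and the ineffability of $\kappa$. As you observe, the failure of $\theta$-recursive amenability in particular excludes a splendid pair of the same cardinality $\lambda$.

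Your consistency remark is not needed for the statement, and its one-line justification is too quick, since a stationary homogeneous set found in $V$ need not lie in $L$. The claim itself is true, because weak compactness of $\kappa$ puts into $L$ every $A\subseteq\kappa$ whose initial segments $A\cap\alpha$ ($\alpha<\kappa$) all lie in $L$.
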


\hfill \qedref{corv=l}

From this corollary we deduce that an attempt to prove that models of uncountable size always admit a splendid extension would be somewhat complicated.
In particular, it does not follow immediately from the assumption $V=L$.
In this light, we would like to phrase a few open problems that emerge out of the above results.

\begin{question}
  \label{qstrength} What is the consistency strength of a non-Kaufmann model $M$ of size $\aleph_1$ with no splendid extension?
\end{question}

The consistency strength of the tree property at $\aleph_2$ is well-established, but maybe there is a different strategy with respect to splendid extensions.

\begin{question}
  \label{qglobal} Is it consistent that for every $\kappa=\cf(\kappa)>\aleph_0$ there is a model of size $\kappa$ with no splendid extension simultaneously?
\end{question}

If one follows the strategy of the above proof then the problem resembles Magidor's question about the possible consistency of the tree property at every $\kappa=\cf(\kappa)>\aleph_1$.
But maybe there is a different way to eliminate splendid extensions.

\begin{question}
  \label{qkaufmann} Let $\kappa$ be strongly inaccessible.
  \begin{enumerate}
    \item [$(a)$] Does every model of size $\kappa$ admit a splendid extension of cardinality $\kappa$?
    \item [$(b)$] Does there necessarily exist, in \textsf{ZFC}, a $\kappa^+$-Kaufmann model?
  \end{enumerate}
\end{question}

The connection between the two parts of the problem is clear in the light of the above theorems.
Since $\aleph_0$, the first strongly inaccessible cardinal, demonstrates a positive answer to both parts of the problem, it is reasonable to try the positive direction at any strongly inaccessible cardinal.

Finally, one may wonder whether consistently there are splendid extensions for every model of a given cardinality.
The first uncountable cardinal is typical.

\begin{question}
  \label{qpositive} Is it consistent that every model of cardinality $\aleph_1$ has a splendid extension?
\end{question}

A possible direction is to force a stronger version of the Baire category theorem in which one can intersect $\aleph_1$-many dense sets.
We note, however, that the problem seems to be more complicated and, in particular, one needs more than that.
To see this, consider \textsf{PFA}, under which such a version of the Baire category theorem holds.
By \cite{MR776640}, the tree property holds at $\aleph_2$ under \textsf{PFA}.\footnote{See \cite{MR1174395}.}
Moreover, $2^{\aleph_1}=\aleph_2$ is a consequence of \textsf{PFA}, thus models of size $\aleph_1$ with no splendid extension exist under \textsf{PFA} as follows from Theorem \ref{thmmt}.

\newpage

\bibliographystyle{alpha}
\bibliography{arlist}

\end{document}